\documentclass{amsart}
\usepackage{amssymb, amsmath, mathrsfs, verbatim, url, bbm}

\theoremstyle{plain}
\newtheorem{theorem}{Theorem}
\newtheorem{lemma}[theorem]{Lemma}
\newtheorem{proposition}[theorem]{Proposition}
\newtheorem{corollary}[theorem]{Corollary}

\theoremstyle{definition}

\newcommand{\re}{\upharpoonright}
\newcommand{\Cof}{\mathsf{Cof}}
\newcommand{\Fin}{\mathsf{Fin}}
\newcommand{\DD}{\mathcal{D}}
\newcommand{\FF}{\mathcal{F}}
\newcommand{\GG}{\mathcal{G}}

\newcommand{\TT}{\mathcal{T}}

\newcommand{\XX}{\mathcal{X}}
\newcommand{\YY}{\mathcal{Y}}
\newcommand{\QQQ}{\mathbb{Q}}

\begin{document}

\title{Every filter is homeomorphic to its square}

\author{Andrea Medini}
\address{Kurt G\"odel Research Center for Mathematical Logic
\newline\indent University of Vienna
\newline\indent W\"ahringer Stra{\ss}e 25
\newline\indent A-1090 Wien, Austria}
\email{andrea.medini@univie.ac.at}
\urladdr{http://www.logic.univie.ac.at/\~{}medinia2/}

\author{Lyubomyr Zdomskyy}
\address{Kurt G\"odel Research Center for Mathematical Logic
\newline\indent University of Vienna
\newline\indent W\"ahringer Stra{\ss}e 25
\newline\indent A-1090 Wien, Austria}
\email{lyubomyr.zdomskyy@univie.ac.at}
\urladdr{http://www.logic.univie.ac.at/\~{}lzdomsky/}

\keywords{Filter, ideal, square, homeomorphism, semifilter.}

\thanks{The first-listed author was supported by the FWF grant M 1851-N35. The second-listed author was supported by the FWF grant I 1209-N25.}

\date{May 13, 2016}

\begin{abstract}
We show that every filter $\FF$ on $\omega$, viewed as a subspace of $2^\omega$, is homeomorphic to $\FF^2$. This generalizes a theorem of van Engelen, who proved that this holds for Borel filters.
\end{abstract}

\maketitle

\section{Introduction}

In \cite{vanengeleni}, van Engelen obtained a purely topological characterization of filters, among the zero-dimensional Borel spaces.\footnote{Actually, van Engelen stated his results for ideals. Using the homeomorphism $c:2^\omega\longrightarrow 2^\omega$ defined by $c(X)(n)=1-X(n)$ for $X\in 2^\omega$ and $n\in\omega$, one sees that his results also hold for filters.} In particular, he obtained the following result (see \cite[Lemma 3.1]{vanengeleni}).
\begin{theorem}[van Engelen]\label{borelfilters}
If $\FF$ is a Borel filter then $\FF$ is homeomorphic to $\FF^2$.
\end{theorem}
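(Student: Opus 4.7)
The natural approach is to invoke van Engelen's classification of zero-dimensional Borel spaces up to homeomorphism. That classification characterizes such a space by its Wadge class together with suitable homogeneity properties (chiefly \emph{$h$-homogeneity}: every nonempty clopen subspace is homeomorphic to the whole space) and local Baire-category / local compactness invariants. The plan is to verify that $\FF$ and $\FF^2$ share all of these invariants, so that the classification forces them to be homeomorphic.

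First I would verify the invariants for $\FF$ itself. The space $\FF\subseteq 2^\omega$ is zero-dimensional, and one may assume without loss of generality that $\FF$ contains the Fr\'echet filter (the principal and improper cases being trivial). For $h$-homogeneity, observe that every nonempty basic clopen subset of $\FF$ has the form $[s]\cap\FF$ for some $s\in 2^{<\omega}$, and translation by a finite symmetric difference $A\mapsto A\triangle F$ gives a homeomorphism of $[s]\cap\FF$ onto $\FF$; this uses that the Fr\'echet filter is contained in $\FF$ to transfer the extension of $s$ back to a full element of $\FF$. The filter $\FF$ is first category in itself, being a dense meager Borel subset of $2^\omega$, and it is nowhere locally compact for the same reason. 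Finally, being Borel, $\FF$ lies in some specific Wadge class $\Gamma$.

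Next I would check that $\FF^2$ has the same invariants. Zero-dimensionality and $h$-homogeneity are straightforward, since every nonempty basic clopen in the product is a product of clopens each of which is homeomorphic to $\FF$, and hence is homeomorphic to $\FF\times\FF$. First category in itself follows from the product of meager sets being meager in $2^\omega\times 2^\omega$, and nowhere local compactness is also inherited. The crucial step is verifying that $\FF^2$ lies in the same Wadge class $\Gamma$ as $\FF$: this relies on the Borel additive and multiplicative classes $\mathbf{\Sigma}^0_\xi$ and $\mathbf{\Pi}^0_\xi$ (for $\xi\geq 1$) being closed under finite products, together with a matching computation for the finer Wadge structure. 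Once the invariants are matched, van Engelen's classification yields $\FF\cong\FF^2$. I expect the main obstacle to be the Wadge-theoretic bookkeeping---ensuring that $\FF$ and $\FF^2$ lie in the same Wadge class, not merely at the same Borel level---together with the careful invocation of van Engelen's classification in its precise form.
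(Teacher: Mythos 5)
Your overall strategy---match the invariants in van Engelen's classification of homogeneous zero-dimensional Borel spaces (equivalently, apply Steel's theorem once the spaces are known to be Wadge equivalent, meager in themselves, and everywhere of the right complexity)---is exactly the route the paper attributes to van Engelen, and the verifications of zero-dimensionality, $h$-homogeneity, meagerness, and nowhere local compactness are essentially right (for $h$-homogeneity you still need the restriction argument $\FF\re[n,\omega)\approx\FF$ in addition to the finite translation, since $A\mapsto A\triangle F$ carries $[s]$ onto another basic clopen set, not onto $2^\omega$; but this is routine for non-principal $\FF$).

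The genuine gap is at the step you yourself flag as crucial: showing that $\FF^2$ lies in the \emph{same Wadge class} as $\FF$. Closure of the coarse classes $\mathbf{\Sigma}^0_\xi$ and $\mathbf{\Pi}^0_\xi$ under finite products is far too weak for this, because van Engelen's classification separates spaces by much finer (Wadge/difference-hierarchy) classes, and for a general set $A$ it is simply false that $A\times A\equiv_W A$: writing $A\times A=(A\times 2^\omega)\cap(2^\omega\times A)$ exhibits it only as an intersection of two sets Wadge equivalent to $A$, and such intersections can strictly increase the Wadge degree (e.g.\ when $A$ is a true difference of two open sets). Indeed, the paper's Proposition \ref{semifilter} produces a semifilter $\TT$ --- satisfying conditions $(\ref{empty})$, $(\ref{finitemod})$, $(\ref{superset})$ but not $(\ref{intersection})$ --- with $\TT^2\not\approx\TT$, precisely because $\TT^2$ has strictly higher complexity than $\TT$; since your argument never invokes closure under intersection, it would ``prove'' the statement for $\TT$ as well, so something must be missing. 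The missing ingredient is the one the introduction records: the map $(X,Y)\mapsto X\cap Y$ is continuous from $2^\omega\times 2^\omega$ to $2^\omega$, and by conditions $(\ref{superset})$ and $(\ref{intersection})$ one has $X\cap Y\in\FF$ if and only if both $X\in\FF$ and $Y\in\FF$; together with the continuous section $X\mapsto(X,\omega)$ this gives $\FF^2\equiv_W\FF$ directly. With that substitution your outline becomes the standard proof.
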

The main ingredients of his proof are the fact that every filter $\FF$ is Wadge equivalent to $\FF^2$ (which is easy to see using the operation of intersection), a theorem of Steel from \cite{steel}, and some of his previous work from \cite{vanengelent}. It is natural to ask whether the assumption that $\FF$ is Borel is really necessary in Theorem \ref{borelfilters}. Our main result shows that it is not (see Theorem \ref{main}), and its proof only uses elementary methods.

\section{Notation}

Throughout this paper, $\Omega$ will denote a countably infinite set.
A \emph{filter} on $\Omega$ is a collection $\FF$ of subsets of $\Omega$ that satisfies the following conditions. We will write $X\subseteq^\ast Y$ to mean that $X\setminus Y$ is finite, and we will write $X=^\ast Y$ to mean that $X\subseteq^\ast Y$ and $Y\subseteq^\ast X$.
\begin{enumerate}
\item\label{empty} $\varnothing\notin\FF$ and $\Omega\in\FF$.
\item\label{finitemod} If $X\in\FF$ and $X=^\ast Y\subseteq\Omega$ then $Y\in\FF$.
\item\label{superset} If $X\in\FF$ and $X\subseteq Y\subseteq\Omega$ then $Y\in\FF$.
\item\label{intersection} If $X,Y\in\FF$ then $X\cap Y\in\FF$.
\end{enumerate}
All filters are assumed to be on $\omega$ unless we explicitly say otherwise. A filter is \emph{principal} if there exists $\Omega\subseteq\omega$ such that $\FF=\{X\subseteq\omega: \Omega\subseteq^\ast X\}$. Define $\Fin(\Omega)=\{X\subseteq\Omega:X\text{ is finite}\}$ and $\Cof(\Omega)=\{X\subseteq\Omega:\Omega\setminus X\text{ is finite}\}$.

We will freely identify any collection $\XX$ consisting of subsets of $\Omega$ with the subspace of $2^\Omega$ consisting of the characteristic functions of elements of $\XX$. In particular, every filter on $\Omega$ will inherit the subspace topology from $2^\Omega$.

Given a function $f$ and a subset $S$ of the domain of $f$, let $f[S]=\{f(X):X\in S\}$ denote the image of $S$ under $f$.

By \emph{space} we will always mean separable metrizable topological space. A space is \emph{crowded} if it is non-empty and it has no isolated points. Given spaces $\XX$ and $\YY$, we will write $\XX\approx \YY$ to mean that $\XX$ and $\YY$ are homeomorphic. We will be using freely the following well-known characterizations of $\QQQ$ and $2^\omega$ (see \cite[Theorem 1.9.6]{vanmill} and \cite[Theorem 1.5.5]{vanmill} respectively).
\begin{itemize}
\item If $\XX$ is a crowded countable space then $\XX\approx\QQQ$.
\item If $\XX$ is a crowded compact zero-dimensional space then $\XX\approx 2^\omega$.
\end{itemize}
We will also assume that the reader is familiar with the basic theory of topologically complete spaces (see for example \cite[Section A.6]{vanmill}).

Given a collection $\XX$ consisting of subsets of $\omega$ and $\Omega\subseteq\omega$, define
$$
\XX\re\Omega=\{X\cap\Omega:X\in\XX\}.
$$
Notice that $\FF\re\Omega=\{X\in\FF:X\subseteq\Omega\}$ whenever $\FF$ is a filter and $\Omega\in\FF$.

We conclude this section by remarking that many authors (including van Engelen in \cite{vanengeleni}) give a more general notion of filter than the one we gave above. The most general notion possible seems to be the following. Define a \emph{prefilter} on $\Omega$ to be a collection $\FF$ of subsets of $\Omega$ that satisfies conditions $(\ref{superset})$ and $(\ref{intersection})$. The next proposition, which can be safely assumed to be folklore, shows that our definition does not result in any substantial loss of generality.

\begin{proposition}
Let $\GG$ be an infinite prefilter on $\omega$. Then either $\GG\approx 2^\omega$ or $\GG\approx\FF$ for some filter $\FF$.
\end{proposition}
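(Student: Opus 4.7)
My approach is to case-split on whether $\varnothing\in\GG$ and on the location of the kernel $K:=\bigcap\GG$. Two degenerate cases yield $\GG\approx 2^\omega$ immediately, and the remaining case reduces, via an ``absorption'' homeomorphism, to the verification that a prefilter with empty kernel and no finite member is automatically closed under $=^\ast$.

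First I would dispose of the degenerate cases. If $\varnothing\in\GG$, then superset closure forces $\GG=\mathcal{P}(\omega)=2^\omega$. If $\varnothing\notin\GG$ and $K\in\GG$, then $K$ is the minimum of $\GG$, so $\GG=\{X\subseteq\omega:K\subseteq X\}$ is homeomorphic to $2^{\omega\setminus K}$ via $X\mapsto X\setminus K$; since $\GG$ is infinite, $\omega\setminus K$ must be infinite, giving $\GG\approx 2^\omega$.

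It remains to handle the case $\varnothing\notin\GG$ and $K\notin\GG$. The map $X\mapsto X\setminus K$ is still a homeomorphism of $\GG$ onto $\GG_0:=\{X\setminus K:X\in\GG\}$, which is easily seen to be a prefilter on $\omega\setminus K$. By construction $\bigcap\GG_0=\varnothing$, and $\varnothing\notin\GG_0$ because $K\notin\GG$. One verifies that $\omega\setminus K$ must be infinite (else $\GG$ is finite) and that $\GG_0$ has no finite element (otherwise the argument of the second degenerate case, applied to $\GG_0$ in place of $\GG$, would force $\bigcap\GG_0\neq\varnothing$). After identifying $\omega\setminus K$ with $\omega$ via a bijection, the proposition reduces to showing that $\GG_0$ is closed under $=^\ast$, since the remaining filter axioms are already in hand.

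The main step---and what I regard as the main obstacle, even though the argument itself turns out to be short---is this last closure property. Given $X\in\GG_0$ and a finite $A\subseteq X$, for each $a\in A$ the hypothesis $\bigcap\GG_0=\varnothing$ provides some $Z_a\in\GG_0$ with $a\notin Z_a$; then $W:=X\cap\bigcap_{a\in A}Z_a\in\GG_0$ by closure under finite intersections, and $W\subseteq X\setminus A$, so superset closure gives $X\setminus A\in\GG_0$. A second application of superset closure yields $(X\setminus A)\cup B\in\GG_0$ for every finite $B\subseteq\omega$, which is precisely $=^\ast$-closure. Hence $\GG_0$ is a filter, and transporting along the chosen bijection gives a filter $\FF$ on $\omega$ with $\GG\approx\GG_0\approx\FF$.
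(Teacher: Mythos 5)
Your proof is correct and takes essentially the same route as the paper's: both pass to the trace of $\GG$ on $\omega\setminus\bigcap\GG$ via the obvious homeomorphism, split on whether the kernel belongs to $\GG$, and establish $=^\ast$-closure by intersecting a given member with finitely many members of $\GG$ that avoid prescribed points. The only cosmetic difference is that you separate the paper's first case into two degenerate subcases.
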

\begin{proof}
Let $\Omega=\omega\setminus\bigcap\GG$, and observe that $\Omega$ is infinite because $\GG$ is infinite.
Notice that $\GG\re\Omega$ is a prefilter on $\Omega$. First assume that $\varnothing\in\GG\re\Omega$.
This means that $\bigcap\GG=\omega\setminus\Omega\in\GG$, hence $\GG=\{X\subseteq\omega:\bigcap\GG\subseteq X\}\approx 2^\omega$.

Now assume that $\varnothing\notin\GG\re\Omega$. We claim that $\GG\re\Omega$ is in fact a filter on $\Omega$.
In order to prove this claim, it only remains to show that condition $(\ref{finitemod})$ is satisfied. Notice that it will be enough to show that $\Cof(\Omega)\subseteq\GG\re\Omega$. So let $F\in\Fin(\Omega)$. Since $\Omega=\omega\setminus\bigcap\GG$ and $\GG$ satisfies condition $(\ref{intersection})$, there must be $X\in\GG$ such that $X\subseteq\omega\setminus F$.
It follows that $\omega\setminus F\in\GG$, hence $\Omega\setminus F\in\GG\re\Omega$. Finally, it is straightforward to check that $\GG\approx\GG\re\Omega$.
\end{proof}

\section{Preliminary results}

The following three lemmas will be needed in the proof of Theorem \ref{main}.

\begin{lemma}\label{restriction}
Assume that $\FF$ is a non-principal filter and $\Omega\in\FF$.
Then $\FF\re\Omega\approx\FF$. 
\end{lemma}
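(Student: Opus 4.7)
The plan is to establish a general ``decomposition'' homeomorphism and apply it twice, using non-principality to create an infinite piece inside $\Omega$ that can be absorbed into a Cantor-space factor.

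First, I would verify the following decomposition lemma: for any filter $\GG$ on a countable set $\Omega'$ and any $\Omega \in \GG$, the map
$$\Phi(X) = \bigl(X \cap \Omega,\, X \cap (\Omega' \setminus \Omega)\bigr)$$
is a homeomorphism $\GG \to (\GG\re\Omega) \times 2^{\Omega'\setminus\Omega}$. Continuity of $\Phi$ and of its inverse $(Z, S) \mapsto Z \cup S$ is obvious, and injectivity is immediate. Surjectivity uses only upward closure: for $Z \in \GG\re\Omega$ and $S \subseteq \Omega' \setminus \Omega$ we have $Z \cup S \supseteq Z \in \GG$, hence $Z \cup S \in \GG$.

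Second, I would use non-principality to produce a partition $\Omega = \Omega_1 \sqcup \Omega_0$ with $\Omega_1 \in \FF$ and $\Omega_0$ infinite. If no such partition existed, then every $Y \in \FF$ would satisfy $\Omega \subseteq^\ast Y$; combined with closure under finite modification and supersets, this would force $\FF = \{X \subseteq \omega : \Omega \subseteq^\ast X\}$, contradicting non-principality. Concretely, I would pick such a $Y \in \FF$ with $\Omega \setminus Y$ infinite and set $\Omega_1 = \Omega \cap Y$, $\Omega_0 = \Omega \setminus Y$.

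To finish, I would note that $\FF\re\Omega$ is itself a filter on $\Omega$ (a routine check of the four axioms) and apply the decomposition lemma twice: once to $\FF$ with $\omega = \Omega \sqcup (\omega\setminus\Omega)$, and once to $\FF\re\Omega$ with $\Omega = \Omega_1 \sqcup \Omega_0$. Substituting the second into the first, this yields
$$\FF \approx (\FF\re\Omega_1) \times 2^{\omega\setminus\Omega_1} \quad\text{and}\quad \FF\re\Omega \approx (\FF\re\Omega_1) \times 2^{\Omega_0}.$$
Since $\Omega_0 \subseteq \omega\setminus\Omega_1$ is infinite, both Cantor factors are homeomorphic to $2^\omega$, so the two right-hand sides agree up to homeomorphism and $\FF \approx \FF\re\Omega$. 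The only non-routine step is producing the infinite piece $\Omega_0$ inside $\Omega$; everything else is bookkeeping with the identity $2^\omega \approx 2^A \times 2^B$ for partitions $\omega = A \sqcup B$ into infinite parts.
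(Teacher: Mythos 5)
Your proof is correct. The one genuinely non-routine step --- using non-principality together with $\Omega\in\FF$ to produce $\Omega_1\in\FF$ with $\Omega_1\subseteq\Omega$ and $\Omega\setminus\Omega_1$ infinite --- is exactly the step the paper takes (there $\Omega_1$ is called $\Omega^\ast$). Where you differ is in how this set is converted into a homeomorphism. The paper builds a single bijection $\pi:\omega\longrightarrow\Omega$ that is the identity on $\Omega^\ast$ and an arbitrary bijection $\omega\setminus\Omega^\ast\longrightarrow\Omega\setminus\Omega^\ast$, and checks directly that $X\mapsto\pi[X]$ carries $\FF$ onto $\FF\re\Omega$. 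You instead prove a decomposition $\GG\approx(\GG\re\Omega)\times 2^{\Omega'\setminus\Omega}$ and apply it twice, matching the two Cantor factors abstractly via $2^A\approx 2^B$ for infinite $A,B$. That decomposition is precisely the content of the paper's Lemma \ref{product}, which the paper proves immediately \emph{after} Lemma \ref{restriction} and using it; so in effect you reverse the order of deduction, proving a relative form of Lemma \ref{product} first and deriving Lemma \ref{restriction} from it. Both routes are elementary and of comparable length: the paper's exhibits the homeomorphism $\FF\to\FF\re\Omega$ explicitly as the map induced by a bijection of index sets, while yours isolates a reusable product decomposition. One small imprecision: surjectivity of your map $\Phi$ uses not only upward closure but also the inclusion $\GG\re\Omega\subseteq\GG$, which in turn needs $\Omega\in\GG$ and closure under intersection (this is the paper's remark that $\FF\re\Omega=\{X\in\FF:X\subseteq\Omega\}$ whenever $\Omega\in\FF$); this is available, so it is not a gap.
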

\begin{proof}
Fix $\Omega^\ast\subseteq\Omega$ such that $\Omega^\ast\in\FF$ and $\Omega\setminus\Omega^\ast$ is infinite. This is possible because $\FF$ is non-principal. Fix a bijection $\sigma:\omega\setminus\Omega^\ast\longrightarrow\Omega\setminus\Omega^\ast$ and let $\tau:\Omega^\ast\longrightarrow\Omega^\ast$ be the identity. Set $\pi=\sigma\cup\tau$ and notice that $\pi:\omega\longrightarrow\Omega$ is a bijection. Therefore, the function $h:2^\omega\longrightarrow 2^\Omega$ defined by setting $h(X)=\pi[X]$ is a homeomorphism. Furthermore, using the fact that $\Omega^\ast\in\FF$, it is straightforward to check that $h[\FF]=\FF\re\Omega$. This shows that $\FF\approx\FF\re\Omega$.
\end{proof}

\begin{lemma}\label{product}
Assume that $\FF$ is a non-principal filter. Then $\FF\times 2^\omega\approx\FF$. 
\end{lemma}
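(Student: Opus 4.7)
The plan is to partition $\omega$ into two infinite pieces, one in $\FF$ and one outside it, and then observe that $\FF$ decomposes as a product along this partition.

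First, I would exploit non-principality to find $\Omega_0 \in \FF$ with $\Omega_1 := \omega \setminus \Omega_0$ infinite. Note that $\Cof(\omega) \subseteq \FF$ always holds, since $\omega \in \FF$ by condition $(\ref{empty})$ and condition $(\ref{finitemod})$ then gives every cofinite set. Hence if no such $\Omega_0$ existed we would have $\FF = \Cof(\omega)$, which is principal (generated by $\omega$ itself), contradicting the hypothesis.

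Next, using the canonical homeomorphism $2^\omega \approx 2^{\Omega_0} \times 2^{\Omega_1}$ given by $X \mapsto (X \cap \Omega_0, X \cap \Omega_1)$, I would verify that $\FF$ corresponds exactly to $(\FF \re \Omega_0) \times 2^{\Omega_1}$. For the forward inclusion, if $X \in \FF$ then condition $(\ref{intersection})$ applied to $X$ and $\Omega_0$ gives $X \cap \Omega_0 \in \FF$, hence in $\FF \re \Omega_0$, while $X \cap \Omega_1$ can be an arbitrary subset of $\Omega_1$. For the reverse inclusion, given $A \in \FF \re \Omega_0$ (so $A \in \FF$) and arbitrary $B \subseteq \Omega_1$, condition $(\ref{superset})$ applied to $A \subseteq A \cup B$ yields $A \cup B \in \FF$.

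Combining these, $\FF \approx (\FF \re \Omega_0) \times 2^{\Omega_1}$, and then Lemma \ref{restriction} gives $\FF \re \Omega_0 \approx \FF$ while $2^{\Omega_1} \approx 2^\omega$ since $\Omega_1$ is countably infinite. The only delicate point is securing the existence of $\Omega_0$; once that is in hand, the filter axioms make the product decomposition essentially automatic, so I do not expect a substantial obstacle beyond this.
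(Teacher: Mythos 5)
Your proof is correct and follows essentially the same route as the paper: fix $\Omega\in\FF\setminus\Cof(\omega)$ (which exists by non-principality), show that the canonical homeomorphism $2^\omega\approx 2^{\Omega}\times 2^{\omega\setminus\Omega}$ carries $\FF$ onto $(\FF\re\Omega)\times 2^{\omega\setminus\Omega}$, and finish with Lemma \ref{restriction}. Your justification that such an $\Omega$ exists (otherwise $\FF=\Cof(\omega)$, which is principal) is a correct elaboration of the paper's one-line appeal to non-principality.
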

\begin{proof}
Fix a $\Omega\in\FF\setminus\Cof(\omega)$. This is possible because $\FF$ is non-principal. Let $h:2^\Omega\times 2^{\omega\setminus\Omega}\longrightarrow 2^\omega$ be the function defined by setting $h(F,X)=F\cup X$. It is clear that $h$ is a homeomorphism. Furthermore, using the fact that $\Omega\in\FF$, one sees that  $h[\FF\re\Omega\times 2^{\omega\setminus\Omega}]=\FF$. Therefore $\FF\re\Omega\times 2^{\omega\setminus\Omega}\approx\FF$. An application of Lemma \ref{restriction} concludes the proof.
\end{proof}

\begin{lemma}\label{principal}
Assume that $\FF$ is a principal filter. Then $\FF^2\approx\FF$. 
\end{lemma}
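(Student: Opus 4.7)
The plan is to unwind what a principal filter looks like topologically, reducing everything to the characterizations of $\QQQ$ and $2^\omega$ recalled in Section 2. Write $\FF=\{X\subseteq\omega:\Omega\subseteq^\ast X\}$. Since conditions (\ref{empty}) and (\ref{finitemod}) force $\Omega$ to be infinite, we split into two cases based on whether $\omega\setminus\Omega$ is finite.

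First, I would dispose of the case where $\omega\setminus\Omega$ is finite. Then every $X\in\FF$ must be cofinite in $\omega$, and conversely every cofinite $X$ satisfies $\Omega\setminus X\subseteq\omega\setminus X$ and hence belongs to $\FF$. So $\FF=\Cof(\omega)$. This set is obviously countable, and it is crowded: for any $X\in\Cof(\omega)$ and any basic neighborhood $U$ of $X$ in $2^\omega$ obtained by fixing finitely many coordinates $F\subseteq\omega$, I can flip one coordinate of $X$ lying outside $F$ (there are cofinitely many $1$'s available for this) to produce another point of $\Cof(\omega)\cap U$ distinct from $X$. By the characterization of $\QQQ$, we get $\FF\approx\QQQ$, and since $\FF^2$ is then also a countable crowded metric space, the same characterization gives $\FF^2\approx\QQQ\approx\FF$.

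Second, I would handle the case where both $\Omega$ and $\omega\setminus\Omega$ are infinite. Here the splitting $2^\omega\approx 2^\Omega\times 2^{\omega\setminus\Omega}$ carries $\FF$ to $\Cof(\Omega)\times 2^{\omega\setminus\Omega}$, via the same map $(F,X)\mapsto F\cup X$ used in Lemma \ref{product}: membership in $\FF$ depends only on $X\cap\Omega$ being cofinite in $\Omega$, and places no restriction on $X\setminus\Omega$. Exactly as above, $\Cof(\Omega)$ is a countable crowded metric space, hence homeomorphic to $\QQQ$, and $2^{\omega\setminus\Omega}\approx 2^\omega$. Thus $\FF\approx\QQQ\times 2^\omega$, and then
\[
\FF^2\approx(\QQQ\times 2^\omega)^2\approx\QQQ^2\times(2^\omega)^2\approx\QQQ\times 2^\omega\approx\FF,
\]
again invoking the two characterizations in Section 2 to absorb the squares.

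There is no real obstacle here; the only mild care is in verifying the split into the two cases (so that $\FF$ does in fact look like a product or a copy of $\Cof(\omega)$) and in noticing that $\Cof(\Omega)$ is crowded. Both are immediate.
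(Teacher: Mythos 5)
Your proof is correct and follows essentially the same route as the paper: split on whether $\Omega$ is cofinite, identify $\FF$ with $\Cof(\omega)\approx\QQQ$ in the first case and with $\Cof(\Omega)\times 2^{\omega\setminus\Omega}\approx\QQQ\times 2^\omega$ in the second (via the splitting map from Lemma \ref{product}), and absorb the square using the characterizations of $\QQQ$ and $2^\omega$. The only difference is that you spell out the final step $\QQQ^2\approx\QQQ$ and $(\QQQ\times 2^\omega)^2\approx\QQQ\times 2^\omega$, which the paper leaves implicit.
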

\begin{proof}
It will be enough to show that $\FF\approx\QQQ$ or $\FF\approx\QQQ\times 2^\omega$. Fix $\Omega\subseteq\omega$ such that $\FF=\{X\subseteq\omega: \Omega\subseteq^\ast X\}$. If $\Omega\in\Cof(\omega)$, then $\FF=\Cof(\omega)\approx\QQQ$. So assume that $\Omega\notin\Cof(\omega)$. The proof of Lemma \ref{product} shows that $\FF\approx\FF\re\Omega\times 2^{\omega\setminus\Omega}$. Since $\FF\re\Omega=\Cof(\Omega)\approx\QQQ$, it follows that $\FF\approx\QQQ\times 2^\omega$.
\end{proof}

\section{The main result}

We begin by introducing some useful notation. Given $S\subseteq\omega$ such that $\omega\setminus S$ is infinite, let $\phi_S:\omega\setminus S\longrightarrow \omega$ denote the unique bijection such that $m<n$ implies $\phi_S(m)<\phi_S(n)$ for all $m,n\in\omega\setminus S$. Given an infinite $\Omega\subseteq\omega$, define
$$
\DD(\Omega)=\{(X,Y)\in 2^\Omega\times 2^\Omega:X\cap Y=\varnothing\}.
$$
It is easy to check that $\DD(\Omega)$ is a closed crowded subspace of $2^\Omega\times 2^\Omega$, which implies $\DD(\Omega)\approx 2^\omega$.

\begin{theorem}\label{main}
If $\FF$ is a filter then $\FF^2\approx\FF$.
\end{theorem}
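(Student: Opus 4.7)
The plan is to separate into the principal and non-principal cases. The principal case follows immediately from Lemma~\ref{principal}, so I focus on a non-principal $\FF$. The target is to establish $\FF^2\approx\FF\times 2^\omega$; combined with Lemma~\ref{product}, this will yield $\FF^2\approx\FF$.

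First observe the ``intersection bundle'' structure of $\FF^2$: the continuous map $(X,Y)\mapsto(X\cap Y,\,X\setminus Y,\,Y\setminus X)$ is a homeomorphism between $\FF^2$ and
\[
E \;:=\; \bigl\{(Z,U,V) : Z\in\FF,\ (U,V)\in\DD(\omega\setminus Z)\bigr\},
\]
whose natural projection $(Z,U,V)\mapsto Z$ has fiber $\DD(\omega\setminus Z)$. For $Z$ in $\FF_\infty := \FF\setminus\Cof(\omega)$, the bijection $\phi_Z$ introduced in the notation yields $\DD(\omega\setminus Z)\approx\DD(\omega)\approx 2^\omega$, and the assignment $(Z,U,V)\mapsto(Z,\phi_Z[U],\phi_Z[V])$ varies continuously in $Z$; this gives a homeomorphism between the portion of $E$ lying above $\FF_\infty$ and $\FF_\infty\times\DD(\omega)\approx\FF_\infty\times 2^\omega$.

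The main obstacle is extending this trivialization over all of $\FF$: over the cofinite sets $Z\in\Cof(\omega)\subseteq\FF$ the fibers $\DD(\omega\setminus Z)$ are merely finite, and since $\Cof(\omega)$ is a countable \emph{dense} (but not clopen) subset of $\FF$, the two regions cannot be separated via a clopen decomposition. My first attempt would be to absorb a factor of $2^\omega$ using Lemma~\ref{product}: since $\FF\approx\FF\times 2^\omega$ one has $\FF^2\approx\FF^2\times 2^\omega\approx E\times 2^\omega$, and each fiber $\DD(\omega\setminus Z)\times 2^\omega$ is then homeomorphic to $2^\omega$, using that for cofinite $Z$ the product $3^{|\omega\setminus Z|}\times 2^\omega$ is a finite disjoint union of copies of $2^\omega$ and hence homeomorphic to $2^\omega$. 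It then suffices to show that the $2^\omega$-bundle $E\times 2^\omega\to\FF$ is globally trivial.

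The hardest step will be the continuous gluing between the $\FF_\infty$- and $\Cof(\omega)$-parts. I would set this up by fixing $\Omega\in\FF$ with $\omega\setminus\Omega$ infinite and using the decomposition of Lemma~\ref{product} ($\FF\approx\FF\re\Omega\times 2^{\omega\setminus\Omega}$, and analogously $\FF^2\approx(\FF\re\Omega)^2\times 2^\omega$), so that the extra $2^{\omega\setminus\Omega}$ factor provides the room to interpolate the data continuously across the cofinite $Z$. A priori the naive $\phi_Z$-based trivialization is discontinuous at this interface, so a delicate explicit construction using that extra room will be needed; finding it is the crux of the argument.
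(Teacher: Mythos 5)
You have set up the problem correctly: the reduction to the non-principal case, the target $\FF^2\times 2^\omega\approx\FF\times 2^\omega$ combined with Lemma~\ref{product}, and the diagnosis that the fibers $\DD(\omega\setminus Z)$ of the intersection bundle degenerate over the dense set $\Cof(\omega)\subseteq\FF$ are all on point, and this is indeed the strategy of the paper. But the argument stops exactly where the work begins: you announce that ``a delicate explicit construction using that extra room will be needed; finding it is the crux of the argument'' without producing it. As written, this is a plan with the one genuinely nontrivial step missing, so it is not yet a proof.

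Two observations close the gap, and both are small corrections to your last paragraph. First, passing to $\FF\re\Omega$ for a fixed $\Omega\in\FF\setminus\Cof(\omega)$ does not create an interface that must be interpolated across --- it \emph{eliminates} the degenerate stratum, since every $Z=F\cap G$ with $F,G\subseteq\Omega$ is automatically coinfinite (its complement contains the infinite set $\omega\setminus\Omega$). Second, the extra factor to adjoin is not an abstract copy of $2^\omega$ but specifically $\DD(\omega\setminus\Omega)$: for $F,G\subseteq\Omega$ and $(X,Y)\in\DD(\omega\setminus\Omega)$, the pair $\bigl((F\setminus G)\cup X,(G\setminus F)\cup Y\bigr)$ ranges exactly over $\DD(\omega\setminus(F\cap G))$, so
$$
h(F,G,X,Y)=\bigl(F\cap G,\ \phi_{F\cap G}[(F\setminus G)\cup X],\ \phi_{F\cap G}[(G\setminus F)\cup Y]\bigr)
$$
defines a bijection of $2^\Omega\times 2^\Omega\times\DD(\omega\setminus\Omega)$ onto $2^\Omega\times\DD(\omega)$ with an explicit continuous inverse; continuity of $h$ holds because any finite initial segment of $\phi_Z$ is determined by a finite initial segment of $Z$, so no discontinuity ever arises. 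Using $\Omega\in\FF$ one checks that $h$ restricts to a homeomorphism of $\FF\re\Omega\times\FF\re\Omega\times\DD(\omega\setminus\Omega)$ onto $\FF\re\Omega\times\DD(\omega)$, and since $\DD(\omega\setminus\Omega)\approx\DD(\omega)\approx 2^\omega$, Lemmas~\ref{restriction} and~\ref{product} finish the argument exactly as you intended. In short: right strategy, right obstacle identified, but the explicit trivialization that constitutes the heart of the proof is absent.
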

\begin{proof}
Let $\FF$ be a filter. If $\FF$ is principal, then the desired conclusion follows from Lemma \ref{principal}. So assume that $\FF$ is non-principal, and fix $\Omega\in\FF\setminus\Cof(\omega)$.

Let $h:2^\Omega\times 2^\Omega\times \DD(\omega\setminus\Omega) \longrightarrow 2^\Omega\times \DD(\omega)$ be the function defined by
$$
h(F,G,X,Y)=(F\cap G, \phi_{F\cap G}[(F\setminus G)\cup X], \phi_{F\cap G}[(G\setminus F)\cup Y]),
$$
and observe that $h$ is continuous.

Let $g:2^\Omega\times  \DD(\omega)\longrightarrow  2^\Omega\times 2^\Omega\times \DD(\omega\setminus\Omega)$ be the function defined by
$$
g(H,Z,W)=(H\cup(\phi_{H}^{-1}[Z]\cap\Omega),H\cup(\phi_{H}^{-1}[W]\cap\Omega),\phi_{H}^{-1}[Z]\cap (\omega\setminus\Omega),\phi_{H}^{-1}[W]\cap (\omega\setminus \Omega)),
$$
and observe that $g$ is continuous. It is straightforward to verify that $g$ is the inverse function of $h$. Therefore $h$ is a homeomorphism.

Furthermore, it is easy to realize that
$$
h[\FF\re\Omega\times \FF\re\Omega\times \DD(\omega\setminus\Omega)]\subseteq\FF\re\Omega\times \DD(\omega)
$$
and
$$
g[\FF\re\Omega\times \DD(\omega)]\subseteq\FF\re\Omega\times \FF\re\Omega\times \DD(\omega\setminus\Omega).
$$
Since $g=h^{-1}$, it follows that $h[\FF\re\Omega\times \FF\re\Omega\times \DD(\omega\setminus\Omega)]=\FF\re\Omega\times \DD(\omega)$. Therefore $\FF\re\Omega\times \FF\re\Omega\times \DD(\omega\setminus\Omega)\approx\FF\re\Omega\times \DD(\omega)$. Finally, using Lemma \ref{restriction} and Lemma \ref{product}, one sees that $\FF^2\approx\FF$.
\end{proof}
\begin{corollary}
Fix natural numbers $m,n\geq 1$. If $\FF$ is a filter then $\FF^m\approx\FF^n$.
\end{corollary}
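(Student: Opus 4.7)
The plan is to derive the corollary directly from Theorem \ref{main} by a short induction, using the fact that homeomorphism is preserved under taking products.

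First I would show, by induction on $k \geq 1$, that $\FF^k \approx \FF$. The base case $k = 1$ is trivial, and the case $k = 2$ is exactly Theorem \ref{main}. For the inductive step, assuming $\FF^k \approx \FF$, one gets
\[
\FF^{k+1} = \FF^k \times \FF \approx \FF \times \FF = \FF^2 \approx \FF,
\]
where the first homeomorphism uses the inductive hypothesis and the final one is Theorem \ref{main}. Implicit here is the elementary observation that if $\XX_1 \approx \YY_1$ and $\XX_2 \approx \YY_2$ then $\XX_1 \times \XX_2 \approx \YY_1 \times \YY_2$, applied to pairs of homeomorphisms.

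Once $\FF^k \approx \FF$ is established for every $k \geq 1$, the corollary is immediate: for any $m, n \geq 1$ we have $\FF^m \approx \FF \approx \FF^n$, hence $\FF^m \approx \FF^n$ by transitivity of $\approx$. There is no real obstacle here; the only thing to notice is that Theorem \ref{main} was stated for arbitrary filters (principal or not), so no case distinction is required at this stage.
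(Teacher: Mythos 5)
Your argument is correct and is exactly the intended one: the paper states this corollary without proof precisely because it follows from Theorem \ref{main} by the routine induction $\FF^{k+1}=\FF^k\times\FF\approx\FF\times\FF\approx\FF$ and transitivity of $\approx$. Nothing is missing.
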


\section{Counterexamples for semifilters}

A \emph{semifilter} on $\Omega$ is a collection $\FF$ of subsets of $\Omega$ that satisfies conditions $(\ref{empty})$, $(\ref{finitemod})$, and $(\ref{superset})$. All semifilters are assumed to be on $\omega$. The following proposition shows that Theorem \ref{main} would not hold if condition $(\ref{intersection})$ were dropped from the definition of filter.

\begin{proposition}\label{semifilter}
There exists a semifilter $\TT$ such that $\TT^2\not\approx\TT$.
\end{proposition}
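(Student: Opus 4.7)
The plan is to construct an explicit semifilter $\TT$ on $\omega$ and distinguish $\TT$ from $\TT^2$ via the cardinal invariant
\[
\kappa(X) = \min\{|S| : S \subseteq X \text{ and } X \setminus S \text{ is topologically complete}\},
\]
defined for any separable metrizable space. Since homeomorphisms preserve both cardinality and topological completeness, $\kappa$ is a topological invariant, so it suffices to exhibit $\TT$ with $\kappa(\TT) \leq \aleph_0 < \kappa(\TT^2)$.

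Partition $\omega$ into two disjoint infinite pieces $A$ and $B$, and set
\[
\TT = \{X \subseteq \omega : X \cap A \text{ is infinite}\} \cup \{X \subseteq \omega : B \subseteq^\ast X\}.
\]
A routine check shows $\TT$ is a semifilter, and since $A, B \in \TT$ while $A \cap B = \varnothing \notin \TT$, it is not a filter. Identifying $2^\omega$ with $2^A \times 2^B$, set $P_1 = (2^A \setminus \Fin(A)) \times 2^B$. Then $P_1 \subseteq \TT$ is $G_\delta$ in $2^\omega$ and hence topologically complete, while $\TT \setminus P_1 = \Fin(A) \times \Cof(B)$ is countable. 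Taking $S = \TT \setminus P_1$ gives $\kappa(\TT) \leq \aleph_0$.

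For $\kappa(\TT^2) > \aleph_0$, first observe that $\TT$ itself is not topologically complete. Indeed, $2^\omega \setminus \TT = \Fin(A) \times (2^B \setminus \Cof(B))$ projects continuously onto $2^B \setminus \Cof(B)$, which (as the complement of a countable dense set in a crowded Cantor space) is a copy of the Baire space, and in particular not $\sigma$-compact. Since $\sigma$-compactness is preserved by continuous images, $2^\omega \setminus \TT$ is not $\sigma$-compact, hence not $F_\sigma$ in the compact space $2^\omega$; therefore $\TT$ is not $G_\delta$ and not topologically complete. Now suppose for contradiction that some countable $S \subseteq \TT^2$ makes $\TT^2 \setminus S$ topologically complete, and for each $x \in \TT$ set $S_x = \{y \in \TT : (x, y) \in S\}$. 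Since $|S| = \sum_{x \in \TT} |S_x|$ is countable, $S_x = \varnothing$ for all but countably many $x$; and since $\TT$ is uncountable (it contains $P_1$), one can pick $x_0 \in \TT$ with $S_{x_0} = \varnothing$. Then the fiber $\{x_0\} \times \TT = \TT^2 \cap (\{x_0\} \times 2^\omega)$ is closed in $\TT^2$ and contained in $\TT^2 \setminus S$, so it is topologically complete; but this fiber is homeomorphic to $\TT$, contradicting the non-completeness of $\TT$. Thus $\kappa(\TT^2) > \aleph_0 \geq \kappa(\TT)$, and $\TT \not\approx \TT^2$.

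The hardest step is the verification that $\TT$ is not topologically complete: this comes down to the non-$\sigma$-compactness of the Baire space, transported across the identification $2^\omega = 2^A \times 2^B$. Once this is in place, the fiber argument mechanically converts any hypothetical countable $S$ witnessing topological completeness of $\TT^2 \setminus S$ into a closed topologically complete copy of $\TT$ inside $\TT^2 \setminus S$, producing the required contradiction.
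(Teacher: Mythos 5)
Your proof is correct, and it uses the very same semifilter as the paper (your $A,B$ are the paper's $\Omega_1,\Omega_2$) together with essentially the same distinguishing invariant: the paper observes that $\TT$ is the union of a topologically complete subspace and a countable subspace, which is exactly your condition $\kappa(\TT)\leq\aleph_0$. Where you diverge is in showing that $\TT^2$ fails this property. The paper exhibits a closed copy of $\QQQ\times 2^\omega$ inside $\TT^2$ (namely $\Cof(\Omega_2)\times\{X:\Omega_1\subseteq X\}$) and leaves to the reader the fact that $\QQQ\times 2^\omega$ admits no such decomposition (which comes down to a Baire category argument), implicitly using that the decomposition property passes to closed subspaces. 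You instead argue directly: you first verify that $\TT$ itself is not topologically complete (via the non-$\sigma$-compactness of $2^B\setminus\Cof(B)$), and then run a fiber argument showing that any countable $S\subseteq\TT^2$ misses some slice $\{x_0\}\times\TT$, which would then be a closed, hence topologically complete, copy of $\TT$ inside $\TT^2\setminus S$. Both routes are sound; yours has the advantage of being fully self-contained (the paper's ``as is not hard to check'' claim about $\QQQ\times 2^\omega$ is replaced by your explicit completeness computation), while the paper's is slightly shorter and isolates a reusable fact about $\QQQ\times 2^\omega$. All the individual steps you use --- $G_\delta$ subsets of $2^\omega$ are topologically complete, closed subspaces of topologically complete spaces are topologically complete, continuous images of $\sigma$-compact spaces are $\sigma$-compact --- are standard and correctly applied.
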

\begin{proof}
Fix infinite sets $\Omega_1$ and $\Omega_2$ such that $\Omega_1\cup\Omega_2=\omega$ and $\Omega_1\cap\Omega_2=\varnothing$. Define
$$
\TT=\{X_1\cup X_2:X_1\subseteq\Omega_1\text{, }X_2\subseteq\Omega_2\text{, Sand }(X_1\notin\Fin(\Omega_1)\text{ or }X_2\in\Cof(\Omega_2))\},
$$
and observe that $\TT$ is a semifilter. Furthermore, it is clear that $\TT$ is the union of its topologically complete subspace $\{X\subseteq\omega:X\cap\Omega_1\notin\Fin(\Omega_1)\}$ and its countable subspace $\{X_1\cup X_2:X_1\in\Fin(\Omega_1)\text{ and }X_2\in\Cof(\Omega_2)\}$.

The following two statements are easy to verify.
\begin{itemize}
\item $\Cof(\Omega_2)$ is a closed subspace of $\TT$ that is homeomorphic to $\QQQ$.
\item $\{X\subseteq\omega:\Omega_1\subseteq X\}$ is a closed subspace of $\TT$ that is homeomorphic to $2^\omega$.
\end{itemize}
It follows that $\TT^2$ has a closed subspace homeomorphic to $\QQQ\times 2^\omega$. Since, as is not hard to check, the space $\QQQ\times 2^\omega$ cannot be written as the union of a topologically complete subspace and a countable subspace, this concludes the proof.
\end{proof}

We remark that the semifilter $\TT$ in the above proof is actually homeomorphic to the notable space $\mathbf{T}$ introduced by van Douwen (unpublished, see \cite{vanengelenvanmill}). See \cite[Proposition 5.4]{medini} for more details.

In fact, the main result of \cite{medini} shows that every homogeneous  zero-dimensional Borel space that is not locally compact is homeomorphic to a semifilter. Together with \cite[Proposition 4.1]{vanengeleng}, which states that $\XX^2\not\approx\XX$ for almost every homogeneous zero-dimensional Borel space $\XX$ of low complexity, this yields many more counterexamples as in Proposition \ref{semifilter}.

\end{document}